\pgfplotsset{compat=1.18} 
\DeclareMathOperator{\rank}{rank}
\DeclareMathOperator{\msr}{msr}
\newtheorem{thm}{Theorem}[section]
\newtheorem{lem}[thm]{Lemma}
\newtheorem{prop}[thm]{Proposition}
\theoremstyle{definition}
\newtheorem{remark}[thm]{Remark}
\numberwithin{equation}{section}
\begin{document}
\title{Nordhaus--Gaddum type bounds for the complement rank}

\author[Q.~Tang]{Quanyu Tang}
\address{School of Mathematics and Statistics, Xi'an Jiaotong University, Xi'an 710049, P. R. China}
\email{tang\_quanyu@163.com}

\subjclass[2020]{Primary 05C50, 05C35.}

\keywords{Nordhaus--Gaddum type bounds, Complement rank, Spectral graph theory}

\begin{abstract}
Let $G$ be an $n$-vertex simple graph with adjacency matrix $A_G$. 
The \emph{complement rank} of $G$ is defined as $\operatorname{rank}(A_G+I)$, where $I$ is the identity matrix. 
In this paper we study Nordhaus--Gaddum type bounds for the complement rank. We prove that for every graph $G$,
$$
\operatorname{rank}(A_G+I)\cdot\operatorname{rank}(A_{\overline G}+I) \ge n, 
\qquad 
\operatorname{rank}(A_G+I)+\operatorname{rank}(A_{\overline G}+I) \ge n+1,
$$
with the equality cases characterized. We further obtain strengthened multiplicative lower bounds under additional structural assumptions.
Finally, we show that the trivial upper bounds
$$
\operatorname{rank}(A_G+I)\cdot\operatorname{rank}(A_{\overline G}+I) \le n^2, 
\qquad 
\operatorname{rank}(A_G+I)+\operatorname{rank}(A_{\overline G}+I) \le 2n
$$
are tight by explicitly constructing, for every $n\ge 4$, graphs $G$ with $\operatorname{rank}(A_G+I)=\operatorname{rank}(A_{\overline G}+I)=n$.
\end{abstract}

\maketitle

\section{Introduction}

We start with some definitions and notation. Throughout this paper we consider only simple graphs, i.e., undirected graphs without loops or multiple edges. Let $G=(V,E)$ be a graph of order $n=|V|$ and size $m=|E|$. The \emph{adjacency matrix} of $G$ is the $n\times n$ matrix $A_G=(a_{ij})$, where $a_{ij}=1$ if the vertices $v_i$ and $v_j$ are adjacent and $a_{ij}=0$ otherwise. For two vertex-disjoint graphs $G_1=(V_1,E_1)$ and $G_2=(V_2,E_2)$, their \emph{union} is defined as
\[
G_1\cup G_2 := (V_1\cup V_2,\; E_1\cup E_2).
\]
In particular, if $G_1$ and $G_2$ are disjoint, then $A_{G_1\cup G_2}$ is block diagonal with blocks $A_{G_1}$ and $A_{G_2}$.

We denote by $K_n$ the complete graph on $n$ vertices, by $K_{a,b}$ the complete bipartite graph with part sizes $a$ and $b$, and by $P_n$ the path on $n$ vertices. We denote by $\overline G$ the complement of a graph $G$. We write $I_n$ for the $n\times n$ identity matrix and $J_n$ for the $n\times n$ all-ones matrix; when the order is clear from the context, the subscript $n$ will be omitted. All matrices are regarded as real matrices, and throughout the paper
\(\rank\) denotes rank over \(\mathbb R\).

The study of inequalities involving a graph parameter $f(G)$ and the same parameter on the 
complement $\overline{G}$ was initiated by Nordhaus and Gaddum in their classical paper~\cite{NG56} on the chromatic number $\chi(G)$ in 1956. They proved that
\[
2\sqrt{n}  \le \chi(G) + \chi(\overline{G})  \le n+1, 
\qquad 
n  \le \chi(G)\cdot \chi(\overline{G})  \le \frac{(n+1)^2}{4}.
\]
Since then, numerous \emph{Nordhaus--Gaddum type bounds} have been established for 
parameters such as treewidth~\cite{JoretWood2012}, 
spectral graph parameters including the sum of squares of positive eigenvalues~\cite{ElphickAouchiche2017} 
and the spectral gap~\cite{KimMadras2025}, 
the rainbow connection number~\cite{ChenLiLian2011}, 
and many others; see the survey in~\cite{AouchicheHansenSurvey}.

Recently, motivated by problems in communication complexity and Rank--Ramsey theory\footnote{A graph $G$ is called \emph{Rank--Ramsey} if both its clique number and the rank of its complement are small, where the rank of a graph means the real rank of its adjacency matrix.},
Beniamini, Linial, and Shraibman~\cite{BeniaminiLinial2024} introduced a new graph parameter called the \emph{complement rank}:
\[
f(G):=\rank(A_G+I).
\]
The matrix $A_G+I$ is the $0$--$1$ matrix which records equality of vertices together with adjacency in $G$. Thus its rank is a linear-algebraic measure of the complexity of this equality-or-adjacency relation. This viewpoint is natural in communication complexity, where the real rank of a communication matrix is a central algebraic measure and is closely connected with the log-rank conjecture~\cite{NisanWigderson1995}. It is also natural in Rank--Ramsey theory: the recently proposed \emph{KRamsey numbers}~\cite{BeniaminiLinial2024} replace the classical Ramsey alternative of finding a large anticlique by a rank condition involving $A_G+I$. In this sense, the complement rank may be viewed as a rank-theoretic analogue of the independence-side condition in classical Ramsey theory.

The complement rank also arises naturally in spectral graph theory, where it is closely related to the independence number $\alpha(G)$, the Lov\'asz theta number $\vartheta(G)$, and the minimum semidefinite rank $\msr(G)$. For a more detailed account of these connections and motivations, we refer the reader to~\cite{BeniaminiLinial2024}.

In this paper we establish Nordhaus--Gaddum type bounds for the complement rank, including both multiplicative and additive upper and lower bounds. 

The rest of this paper is organized as follows. 
In Section~\ref{sec:product1} we prove a Nordhaus--Gaddum type multiplicative lower bound for the complement rank. 
In Section~\ref{sec:sum1} we establish an additive lower bound. Section~\ref{sec:NGproduct2} presents two strengthened versions of the multiplicative lower bound. 
Finally, in Section~\ref{sec:remark1} we discuss the corresponding upper bounds for both the product and the sum.

\section{Product Lower Bound}\label{sec:product1}

We begin with a definition from matrix analysis. For two $m \times n$ matrices 
$A = (a_{ij})$ and $B = (b_{ij})$, the \emph{Hadamard product} (or entrywise product) 
is defined as the $m \times n$ matrix 
\[
A \circ B = (a_{ij} b_{ij}).
\]
A fundamental inequality for the Hadamard product is the following well-known result.

\begin{lem}[Schur Product Theorem]\label{lem:schur}
For any two matrices $A,B \in \mathbb{F}^{n \times n}$ over a field $\mathbb{F}$,  
\[
\rank(A \circ B)  \le \rank(A)\cdot\rank(B).
\]
\end{lem}

Using this lemma, we obtain the following Nordhaus--Gaddum type multiplicative lower bound for the complement rank.

\begin{thm}\label{thm:product-bound}
For every graph $G$ on $n$ vertices,
\[
\rank(A_G+I)\cdot \rank\left(A_{\overline{G}}+I\right)  \ge n,
\]
with equality if and only if either $G$ or $\overline{G}$ is $K_n$.
\end{thm}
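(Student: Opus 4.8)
The plan is to build everything on a single Hadamard-product identity. First I would observe that
$(A_G+I)\circ(A_{\overline G}+I)=I_n$, where $\circ$ denotes the Hadamard product: for $i\neq j$ exactly one of the edges $v_iv_j\in E(G)$, $v_iv_j\in E(\overline G)$ is present, so the $(i,j)$ entry of the product is $0$, while each diagonal entry is $1\cdot 1=1$. Plugging this into the Schur Product Theorem (Lemma~\ref{lem:schur}) immediately yields $n=\rank(I_n)=\rank\bigl((A_G+I)\circ(A_{\overline G}+I)\bigr)\le \rank(A_G+I)\cdot\rank(A_{\overline G}+I)$, which is the asserted inequality.

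For the equality characterization, the easy direction is a direct computation: if $G=K_n$ then $A_G+I=J_n$ has rank $1$ while $\overline G$ is edgeless, so $A_{\overline G}+I=I_n$ has rank $n$, and the product is exactly $n$; the case $\overline G=K_n$ is symmetric. For the converse, set $r=\rank(A_G+I)$ and $s=\rank(A_{\overline G}+I)$ and assume $rs=n$. Since $(A_G+I)+(A_{\overline G}+I)=J_n+I_n$ is invertible, the column spaces of $A_G+I$ and $A_{\overline G}+I$ together span the whole space, so $r+s\ge n$. Together with $rs=n$ this gives $\tfrac1r+\tfrac1s\ge 1$, which for positive integers forces either $\{r,s\}=\{1,n\}$ or $\{r,s\}=\{2,2\}$, the latter only when $n=4$. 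In the case $r=1$ (and symmetrically $s=1$), $A_G+I$ is a symmetric rank-one matrix with all diagonal entries $1$; writing it as $ww^{\top}$ forces $w_i=\pm 1$ for every $i$, and then $w_iw_j\in\{0,1\}$ forces $w_iw_j=1$ for all $i\neq j$, so $w=\pm\mathbf 1$ and $A_G+I=J_n$, i.e.\ $G=K_n$ (resp.\ $\overline G=K_n$).

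It remains to eliminate the parasitic possibility $r=s=2$, $n=4$, and this is the one place where a little extra input is required rather than pure formula-pushing. Two routes are available: invoke the additive bound of Section~\ref{sec:sum1}, which gives $r+s\ge n+1$ and is incompatible with $r=s=2$, $n=4$; or argue directly that among all graphs on four vertices the only ones with $\rank(A_G+I)=2$ are $2K_2$ and $K_3\cup K_1$, whose complements $C_4$ and $K_{1,3}$ both satisfy $\rank(A_{\overline G}+I)=4$, so no four-vertex graph realizes $\{r,s\}=\{2,2\}$. Either way the $\{2,2\}$ case does not occur, and the characterization is complete. The main obstacle, then, is not the inequality (which drops out of the Hadamard identity) but this last small-case exclusion: the bounds $rs\ge n$ and $r+s\ge n$ alone do not pin $\{r,s\}$ down to $\{1,n\}$, so the borderline factorization $2\cdot 2=4$ must be ruled out by hand or via the sharper additive estimate.
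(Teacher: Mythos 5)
Your proof of the inequality is identical to the paper's: the Hadamard identity $(A_G+I)\circ(A_{\overline G}+I)=I_n$ plus the Schur Product Theorem. Where you genuinely diverge is the equality characterization. The paper simply defers this to its Theorem~\ref{thm:strengthened-product-1}, which shows that any $G$ other than $K_n$ or $\overline{K_n}$ satisfies $\rank(A_G+I)\cdot\rank(A_{\overline G}+I)\ge 2n$; that argument in turn leans on a classification of graphs with complement rank $1$ or $2$ (Lemma~\ref{lem:rank2}) and on the additive bound. Your route is more self-contained: from $rs=n$ together with $r+s\ge\rank(J_n+I_n)=n$ (a clean observation the paper does not use) you reduce to $\{r,s\}=\{1,n\}$ or the sporadic case $r=s=2$, $n=4$; the rank-one case is settled by the elementary structure of a symmetric rank-one matrix with unit diagonal and $\{0,1\}$ off-diagonal entries, and the sporadic case is killed either by the additive bound $r+s\ge n+1$ (Theorem~\ref{thm:NGsum1}, whose proof is independent, so there is no circularity) or by a finite check of the two four-vertex graphs with complement rank $2$ ($2K_2$ and $K_3\cup K_1$, whose complements $C_4$ and $K_{1,3}$ both have complement rank $4$). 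All steps check out: $J_n+I_n$ is indeed invertible, the arithmetic $\tfrac1r+\tfrac1s\ge1$ correctly enumerates the integer solutions, and the small-case claims are verifiable. The trade-off is that the paper's detour yields the stronger quantitative statement ($\ge 2n$ off the extremal cases) as a byproduct, whereas your argument proves only the stated theorem but does so with less machinery and makes the borderline factorization $2\cdot2=4$ explicit, which the paper's presentation hides inside the stronger bound.
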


\begin{proof}
Set $X := A_G + I$ and $Y := A_{\overline{G}} + I$. By the definition of the complement graph, for $i \neq j$ we have 
$X_{ij}, Y_{ij} \in \{0,1\}$ and 
\[
X_{ij} + Y_{ij} = 1 ,
\]
while $X_{ii} = Y_{ii} = 1$.  
Therefore, the entrywise Hadamard product satisfies
\[
X \circ Y = I_n .
\]
Applying Lemma~\ref{lem:schur}, we obtain
\[
n = \rank(I_n) = \rank(X \circ Y)  \le \rank(X)\cdot\rank(Y)
= \rank(A_G+I)\cdot\rank(A_{\overline{G}}+I),
\]
which proves the inequality.

As for the equality case, it can be deduced from the Theorem~\ref{thm:strengthened-product-1}, which we prove in Section~\ref{sec:NGproduct2}.
\end{proof}

\section{Sum Lower Bound}\label{sec:sum1}
For any $n$-vertex graph $G$, we have $A_{\overline{G}}+I = J - A_G$. Throughout this paper we will use this identity repeatedly.

The next result establishes the Nordhaus--Gaddum type additive lower bound for the complement rank.

\begin{thm}\label{thm:NGsum1}
For every graph $G$ on $n$ vertices,
\[
\rank(A_G+I) + \rank(A_{\overline G}+I) \ge n+1,
\]
with equality if and only if either $G$ or $\overline{G}$ is $K_n$.
\end{thm}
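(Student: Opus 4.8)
The plan is to run the argument with the symmetric matrices $X := A_G + I$ and $Y := A_{\overline G} + I = J - A_G$, exploiting the identity $X + Y = J + I$. The extra input beyond Theorem~\ref{thm:product-bound} is that $J + I$ is not merely invertible but \emph{positive definite}: its eigenvalues are $n+1$ (once) and $1$ (with multiplicity $n-1$). So the idea is to control inertia, not just rank.

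First I would record the inertia consequence. Let $p_X, q_X, z_X$ denote the numbers of positive, negative, and zero eigenvalues of $X$, and similarly for $Y$. A maximal subspace on which the quadratic form $v \mapsto v^{\top} X v$ is $\le 0$ has dimension $q_X + z_X = n - p_X$, and likewise for $Y$. If a nonzero $v$ lay in both such subspaces, then $v^{\top}(X+Y)v \le 0$, contradicting $X+Y \succ 0$; hence these subspaces intersect trivially and $(n-p_X)+(n-p_Y) \le n$, i.e. $p_X + p_Y \ge n$. Therefore
\[
\rank(X)+\rank(Y) \;=\; (p_X+p_Y)+(q_X+q_Y) \;\ge\; n + (q_X+q_Y).
\]

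The main step is to determine when $\rank(X)+\rank(Y)$ can be $\le n+1$, which forces $q_X+q_Y \le 1$. If $q_X=q_Y=0$, then $X \succeq 0$ and $Y \succeq 0$; since $A_H + I \succeq 0$ exactly when every component of $H$ is a complete graph (a non-complete component contains an induced $P_3$, and interlacing gives $\lambda_{\min}(A_H) \le -\sqrt2 < -1$), both $G$ and $\overline G$ are disjoint unions of cliques, and because the complement of a disconnected graph is connected this leaves only $G = K_n$ and $G = \overline{K_n}$. If instead $q_X+q_Y=1$, say $q_Y=0$, then $\overline G = K_{a_1}\sqcup\cdots\sqcup K_{a_k}$, so $Y = J_{a_1}\oplus\cdots\oplus J_{a_k}$ has exactly $p_Y=k$ positive eigenvalues while $G = K_{a_1,\dots,a_k}$ is complete multipartite; here I would note that $A_G$ carries the eigenvalue $0$ with multiplicity at least $n-k$ (from differences of standard basis vectors within a part) and, when $G$ has an edge, a strictly positive Perron eigenvalue as well, so $X = A_G+I$ has at least $n-k+1$ positive eigenvalues, forcing $p_X+p_Y \ge n+1$ and hence $q_X+q_Y=0$ — a contradiction (the edgeless case $G=\overline{K_n}$ is immediate, as then $X=I$). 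Thus the graphs with $\rank(X)+\rank(Y)\le n+1$ are precisely $K_n$ and $\overline{K_n}$.

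Finally, to conclude: a value $\rank(X)+\rank(Y)\le n$ would in particular be $\le n+1$, hence force $G\in\{K_n,\overline{K_n}\}$; but for those $\rank(X)+\rank(Y)=1+n=n+1$, a contradiction, so $\rank(X)+\rank(Y)\ge n+1$ for every $G$. The same computation shows $K_n$ and $\overline{K_n}$ attain $n+1$, so equality holds exactly when $G$ or $\overline G$ is $K_n$. The delicate point I anticipate is the case $q_X+q_Y=1$: one must squeeze enough structure out of a single positive-semidefinite side ($\overline G$ a union of $k$ cliques) and combine it with Perron and eigenvalue-multiplicity information on the complete multipartite side to push the inertia bound from $p_X+p_Y\ge n$ up to $\ge n+1$ in exactly that configuration.
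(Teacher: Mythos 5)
Your proof is correct, and it takes a genuinely different route from the paper's. The paper restricts to the hyperplane $W=\mathbf{1}^{\perp}$, uses the identity $(A_{\overline G}+I)\big|_W=-A_G\big|_W$ to convert the two nullities into the multiplicities of the eigenvalues $-1$ and $0$ of $A_G$ on $W$, and obtains $n+1$ in one line because those two eigenspaces meet trivially inside the $(n-1)$-dimensional space $W$; equality then forces all but one eigenvalue of $G$ into $\{-1,0\}$. You instead exploit the positive definiteness of $X+Y=J+I$ together with subadditivity of the positive index of inertia to get $p_X+p_Y\ge n$, hence $\rank X+\rank Y\ge n+(q_X+q_Y)$, and you finish with a structural analysis of the case $q_X+q_Y\le 1$ via the classification of graphs with $\lambda_{\min}\ge -1$ as disjoint unions of cliques. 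All the individual steps check out: the inertia bound, the $P_3$/interlacing characterization of $A_H+I\succeq 0$, and the count $p_X\ge n-k+1$ for a complete multipartite graph with an edge (kernel vectors $e_u-e_v$ within parts, plus an independent Perron eigenvector). Your route needs this extra case analysis to lift the bound from $n$ to $n+1$, but it buys two things: a finer intermediate inequality (the excess over $n$ is at least the total number of eigenvalues of $G$ and of $\overline G$ lying below $-1$), and an equality analysis that never has to argue that $\ker(A_G+I)$ and $\ker(A_{\overline G}+I)$ are contained in $\mathbf{1}^{\perp}$ -- a point the paper's proof relies on and justifies only briefly.
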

\begin{proof}
Set
\[
X:=A_G+I,\qquad Y:=A_{\overline G}+I=J-A_G,
\qquad S:=X+Y=I+J.
\]
The matrix $S=I+J$ is positive definite, and
\[
S^{-1}=I-\frac{1}{n+1}J.
\]
Define \(B:=S^{-1/2}XS^{-1/2}\). Then $B$ is real symmetric and
\[
S^{-1/2}YS^{-1/2}=I-B.
\]
Since multiplication by an invertible matrix does not change rank,
\begin{equation}\label{eq:rank-sum-via-B}
\rank X+\rank Y=\rank B+\rank(I-B).
\end{equation}
Let $\lambda_1,\ldots,\lambda_n$ be the eigenvalues of $B$. For each eigenvalue, the contribution to the right-hand side of \eqref{eq:rank-sum-via-B} is
\[
\begin{cases}
1, & \lambda_i=0,\\
1, & \lambda_i=1,\\
2, & \lambda_i\notin\{0,1\}.
\end{cases}
\]
Therefore
\begin{equation}\label{eq:rank-sum-n-plus-k}
\rank X+\rank Y=n+k,
\end{equation}
where $k$ is the number of eigenvalues of $B$ outside $\{0,1\}$, counted with multiplicity.

We claim that $k\ge 1$. Suppose, to the contrary, that $k=0$. Then $B(I-B)=0$. Put \(N:=XS^{-1}Y\). Since $Y=S-X$, we have
\[
N=XS^{-1}(S-X)=X-XS^{-1}X,
\]
so $N$ is symmetric. Moreover, \(N=S^{1/2}B(I-B)S^{1/2}\), and hence $N=0$. Let $d_i$ be the degree of vertex $i$ in $G$. The $i$-th column of $X$ has exactly $d_i+1$ entries equal to $1$. Hence
\[
(XS^{-1}X)_{ii}=(d_i+1)-\frac{(d_i+1)^2}{n+1}.
\]
It follows that
\begin{equation}\label{eq:N-diagonal}
N_{ii}
=1-(d_i+1)+\frac{(d_i+1)^2}{n+1}
=\frac{1-d_i(n-1-d_i)}{n+1}.
\end{equation}
Since $N=0$, we know that $d_i(n-1-d_i)=1$ for every $i$. Since $d_i$ and $n-1-d_i$ are nonnegative integers, this forces $d_i=1$ and $n-1-d_i=1$ for every $i$. Thus $n=3$ and every vertex has degree $1$, which is impossible. This contradiction proves $k\ge 1$. By \eqref{eq:rank-sum-n-plus-k},
\[
\rank(A_G+I)+\rank(A_{\overline G}+I)\ge n+1.
\]

It remains to characterize equality. If $G=K_n$, then $A_G+I=J$ and $A_{\overline G}+I=I$, so the rank sum is $1+n=n+1$. The same conclusion holds when $G=\overline{K_n}$.

Conversely, suppose that equality holds. Then \(k=1\), and hence \(N=S^{1/2}B(I-B)S^{1/2}\) is a real symmetric matrix of rank one. We shall show that this is impossible unless \(G\) is complete or empty.

For \(n\le 3\), the assertion is checked directly. Hence we may assume that \(n\ge4\), and that \(G\) is neither complete nor empty. Then \(G\) has a vertex whose degree \(d_i\) satisfies \(1\le d_i\le n-2\). For this vertex, \eqref{eq:N-diagonal} gives $N_{ii}<0$, since \(d_i(n-1-d_i)\ge n-2\ge2\). On the other hand, if a vertex had degree \(0\) or \(n-1\), then \eqref{eq:N-diagonal} would give $N_{ii}=1/(n+1)>0$. This is impossible, because a real symmetric matrix of rank one has the form $N=\lambda uu^{\top}$ with \(\lambda\in\mathbb R\setminus\{0\}\), and therefore all its nonzero diagonal entries \(N_{jj}=\lambda u_j^2\) have the same sign. Consequently every vertex satisfies \(1\le d_j\le n-2\), and all diagonal entries of \(N\) are negative. It follows from \(N=\lambda uu^{\top}\) that \(\lambda<0\). Therefore, for every \(z\in\mathbb R^n\),
\[
        z^{\top}Nz=\lambda (u^{\top}z)^2\le0.
\]
Thus \(N\) is negative semidefinite.

By Sylvester's law of inertia, $B(I-B)$ also has exactly one negative eigenvalue and all its other eigenvalues are zero. Since $B$ has exactly one eigenvalue outside $\{0,1\}$, say $\theta$, we have \(\theta(1-\theta)<0\). Thus either $\theta>1$ or $\theta<0$. If $\theta>1$, then $B$ is positive semidefinite, and hence
\[
X=S^{1/2}BS^{1/2}
\]
is positive semidefinite. If $\theta<0$, then $I-B$ is positive semidefinite, and hence
\[
Y=S^{1/2}(I-B)S^{1/2}
\]
is positive semidefinite.

We use the following elementary observation: if $H$ is a graph such that $A_H+I$ is positive semidefinite, then $H$ is a disjoint union of complete graphs. Indeed, if a connected component of $H$ is not complete, then it contains an induced $P_3$. On the three vertices of this induced path, the corresponding principal submatrix of $A_H+I$ is
\[
\begin{pmatrix}
1&1&0\\
1&1&1\\
0&1&1
\end{pmatrix},
\]
whose determinant is $-1$, contradicting positive semidefiniteness.

Suppose first that $X$ is positive semidefinite. Then
\[
G=K_{s_1}\cup\cdots\cup K_{s_q}.
\]
Since every vertex has degree between $1$ and $n-2$, we have $q\ge2$ and $s_j\ge2$ for all $j$. Thus $\rank X=q$. The complement $\overline G$ is a complete $q$-partite graph. Let \(V_1,\ldots,V_q\) be the parts of \(\overline G\), with
\(|V_j|=s_j\). Define
\[
W:=\left\{x\in\mathbb R^n:\ \sum_{v\in V_j}x_v=0
\text{ for every }j=1,\ldots,q\right\}.
\]
Then \(\dim W=\sum_{j=1}^q(s_j-1)=n-q\). We claim that \(Y\) acts as the identity on \(W\). Indeed, if \(x\in W\) and
\(v\in V_a\), then, since \(\overline G\) is complete \(q\)-partite,
\[
(Yx)_v
=x_v+\sum_{u\sim_{\overline G} v}x_u
=x_v+\sum_{b\ne a}\sum_{u\in V_b}x_u
=x_v.
\]
Thus \(Yx=x\) for every \(x\in W\), as claimed. Let
\[
U:=\{x\in\mathbb R^n:\ x \text{ is constant on each } V_j\}.
\]
Then \(\dim U=q\), and \(\mathbb R^n=W\oplus U\). Indeed, if \(x\in W\cap U\), then \(x\) is equal to some constant \(a_j\) on
\(V_j\), and hence
\[
0=\sum_{v\in V_j}x_v=s_j a_j
\]
for every \(j\), so \(x=0\). Since \(\dim W+\dim U=(n-q)+q=n\), the direct-sum
decomposition follows.

Moreover, \(U\) is \(Y\)-invariant. Indeed, if \(x\in U\) has value \(a_j\)
on \(V_j\), then for \(v\in V_i\),
\[
(Yx)_v
=a_i+\sum_{\ell\ne i}s_\ell a_\ell,
\]
which depends only on \(i\). Thus \(Yx\in U\).

We now show directly that \(Y|_U\) has rank at least \(2\). Choose two distinct
indices \(p,r\). For \(j=1,\ldots,q\), let \(\mathbf 1_{V_j}\) denote the
indicator vector of \(V_j\). Since \(\overline G\) is complete \(q\)-partite,
the vector \(Y\mathbf 1_{V_j}\) has value \(1\) on \(V_j\) and value \(s_j\)
on every other part. Suppose that
\[
\alpha Y\mathbf 1_{V_p}+\beta Y\mathbf 1_{V_r}=0.
\]
Looking at a coordinate in \(V_p\) and a coordinate in \(V_r\), respectively, gives
\[
\alpha+\beta s_r=0,\qquad \alpha s_p+\beta=0.
\]
The determinant of this linear system is \(1-s_ps_r\ne0\), because \(s_p,s_r\ge2\). Hence \(\alpha=\beta=0\). Therefore
\(Y\mathbf 1_{V_p}\) and \(Y\mathbf 1_{V_r}\) are linearly independent, and so
\[
\rank(Y|_U)\ge2.
\]
Since \(W\) and \(U\) are both \(Y\)-invariant and
\(\mathbb R^n=W\oplus U\), while \(Y|_W=I_W\), we obtain
\[
\rank Y=\rank(Y|_W)+\rank(Y|_U)\ge (n-q)+2.
\]
Consequently,
\[
\rank X+\rank Y\ge q+(n-q+2)=n+2,
\]
contradicting equality. If instead $Y$ is positive semidefinite, the same argument applied to $\overline G$ gives the same contradiction. Therefore equality can occur only when $G=K_n$ or $G=\overline{K_n}$.
\end{proof}

\section{Strengthened Product Lower Bounds}\label{sec:NGproduct2}
To establish stronger multiplicative lower bounds, we begin with a simple but useful classification of graphs with very small complement rank.

\begin{lem}\label{lem:rank2}
Let $G$ be a graph on $n$ vertices. Then,
\begin{enumerate}
    \item $\rank(A_{G}+I)=1$ if and only if $G$ is the complete graph $K_n$.
    \item $\rank(A_G+I)=2$ if and only if $\overline{G}$ is the complete bipartite graph $K_{a, n-a}$ with $1 \le a \le \lfloor n/2 \rfloor$.
\end{enumerate}
\end{lem}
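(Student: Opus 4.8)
The plan is to handle the two cases in increasing order of rank, using the identity $A_{\overline G}+I = J - A_G$ throughout, together with standard facts about low-rank $0/1$-matrices.

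For part (1), the direction that $G=K_n$ gives $\rank(A_G+I)=\rank(J_n)=1$ is immediate. Conversely, if $\rank(A_G+I)=1$ then $A_G+I$ is a symmetric rank-one matrix with all diagonal entries equal to $1$, so it must be $vv^\top$ for some vector $v$ with $v_i^2=1$ for every $i$; since $A_G+I$ is also a $0/1$-matrix, the only possibility is $v=\mathbf 1$ (up to sign), whence $A_G+I=J_n$ and $G=K_n$. I would write this as a short self-contained argument rather than invoking Theorem \ref{thm:strengthened-product-1}, to keep the lemma logically prior.

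For part (2), one direction is a direct computation: if $\overline G = K_{a,n-a}$ then $A_G + I = J - A_{\overline G} = J - A_{K_{a,n-a}}$, which after a suitable vertex ordering is the block matrix $\begin{pmatrix} J_a & 0 \\ 0 & J_{n-a}\end{pmatrix}$, visibly of rank $2$ when $1\le a\le n-1$. For the converse, suppose $\rank(A_G+I)=2$. The key step is to use the fact that $A_G+I$ is a symmetric $0/1$-matrix of rank $2$ with all-ones diagonal, and to argue that such a matrix must be, up to simultaneous row/column permutation, a block-diagonal matrix with exactly two all-ones blocks $J_a$ and $J_{n-a}$. To see this, note that two columns of $A_G+I$ are either equal or linearly independent (rank $2$ forces at most two "types" of columns up to the span, but with $0/1$ entries and a $1$ on the diagonal the columns actually fall into two equality classes), and symmetry plus the diagonal condition then pins down the structure: index $i\sim j$ iff columns $i$ and $j$ are equal defines a partition into two classes, within which all entries are $1$ and between which all entries are $0$. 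Translating back, $A_{\overline G}$ is the complete bipartite graph on this bipartition, i.e.\ $\overline G = K_{a,n-a}$; taking $a\le \lfloor n/2\rfloor$ removes the redundancy. The case $a=n$ (one class empty) is excluded since it gives rank $1$, consistent with part (1).

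The main obstacle is the converse of part (2): rigorously showing that a symmetric $0/1$-matrix of rank exactly $2$ with unit diagonal has the claimed block form. The subtlety is ruling out "mixed" column types --- one must use that if a column had both a $0$ and a $1$ off the diagonal in a way incompatible with a two-block partition, symmetry would force a rank $3$ submatrix, or the diagonal condition would be violated. I would make this precise by examining $2\times 2$ principal submatrices: a symmetric $0/1$-matrix with unit diagonal has a $2\times 2$ principal minor equal to $1-A_{ij}^2$, which is $0$ exactly when $i,j$ are adjacent in $G$ (equivalently non-adjacent in $\overline G$); combining this with rank $2$ forces the "being in the same block" relation to be an equivalence relation with exactly two classes, which is precisely the structure of a complete bipartite complement.
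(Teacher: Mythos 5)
Your proposal is correct, and for part (2) it takes a genuinely different route from the paper. The paper splits on whether $\overline G$ is connected: in the connected case it simply cites Lemma~2.14 of Beniamini--Linial--Shraibman (rank $2$ iff $\overline G$ is a blowup of a $2$-clique), and in the disconnected case it finds an induced $P_3$ in $G$ and computes the $3\times 3$ principal minor
$\det\begin{pmatrix}1&1&0\\1&1&1\\0&1&1\end{pmatrix}=-1$ to force rank $\ge 3$. You instead classify symmetric $0/1$ matrices of rank $2$ with unit diagonal directly, which makes the lemma self-contained (no external citation) and also subsumes part~(1) via the same framework; your rank-one argument for part~(1) (a positive-diagonal symmetric rank-one $0/1$ matrix must be $J$) is likewise a valid alternative to the paper's two-independent-columns argument. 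One point to tighten: the $2\times 2$ principal minors you invoke at the end only show that vertices in distinct classes give independent columns (hence at most two classes); they do \emph{not} give transitivity of the ``same block'' relation. For that you need a $3\times 3$ principal minor — either the $P_3$ pattern above (ruling out $m_{ij}=m_{jk}=1$, $m_{ik}=0$) or its variant $\begin{pmatrix}1&1&1\\1&1&0\\1&0&1\end{pmatrix}$, both with determinant $-1$ — which is exactly the computation the paper performs in its disconnected case. You do gesture at this (``symmetry would force a rank $3$ submatrix''), so the idea is present, but the written argument should make that determinant explicit rather than resting on $2\times 2$ minors.
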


\begin{proof}
\emph{(1)} If \(G=K_n\), then \(A_G+I=J_n\), so \(\rank(A_G+I)=1\).
Conversely, suppose that \(\rank(A_G+I)=1\). If \(G\ne K_n\), then there exist
two distinct nonadjacent vertices \(u,v\). The principal submatrix of \(A_G+I\)
on \(\{u,v\}\) is
\[
\begin{pmatrix}
1&0\\
0&1
\end{pmatrix},
\]
which has rank \(2\), a contradiction. Hence \(G=K_n\).

\smallskip
\emph{(2)} By \cite[Lemma~2.14]{BeniaminiLinial2024}, if $\overline{G}$ is connected, then we know that $\rank(A_G+I) = 2$ if and only if $\overline{G}$ is a blowup of a $2$-clique. 

If $\overline{G}$ is not connected, then $G$ must be connected. By (1), we can assume $G$ is not complete.
Then there exist nonadjacent vertices $u,v$. Take a shortest $u$--$v$ path
$u=x_0,x_1,x_2,\dots,x_k=v$ with $k\ge2$. By minimality, $x_0$ and $x_2$ are not adjacent, so
the induced subgraph on $\{x_0,x_1,x_2\}$ is a path $P_3$. The $3\times3$ principal submatrix of $A_G+I$ on $\{x_0,x_1,x_2\}$ is
\[
\begin{pmatrix}
1&1&0\\
1&1&1\\
0&1&1
\end{pmatrix},
\]
whose determinant equals $-1\neq0$. Hence this principal minor is nonsingular, so
$\rank(A_G+I)\ge3$, ruling out $\rank(A_G+I)=2$.
\end{proof}

With Lemma~\ref{lem:rank2} in hand, we can now derive our first strengthened product lower bound, which improves the general product inequality whenever the graph is neither complete nor empty.

\begin{thm}\label{thm:strengthened-product-1}
Let $G$ be a simple graph on $n$ vertices that is neither $K_n$ nor $\overline{K_n}$.
Then
\[
\rank(A_G+I)\cdot\rank(A_{\overline G}+I) \ge 2n.
\]
Moreover, equality holds if and only if either $G$ or $\overline G$ is $K_{a, n-a}$ with $1 \le a \le \lfloor n/2 \rfloor$.
\end{thm}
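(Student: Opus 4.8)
The plan is to deduce this from two results already in hand: the classification of graphs of complement rank $1$ and $2$ in Lemma~\ref{lem:rank2}, and the additive lower bound of Theorem~\ref{thm:NGsum1}. Write $r:=\rank(A_G+I)$ and $s:=\rank(A_{\overline G}+I)$. First I would collect three elementary constraints on the pair $(r,s)$. Since $G\neq K_n$, part~(1) of Lemma~\ref{lem:rank2} gives $r\ge 2$, and applying the same part to $\overline G$ (using $G\neq \overline{K_n}$, equivalently $\overline G\neq K_n$) gives $s\ge 2$; trivially $r,s\le n$; and since $G$ is neither $K_n$ nor $\overline{K_n}$, the equality clause of Theorem~\ref{thm:NGsum1} upgrades the additive bound to the strict inequality $r+s\ge n+2$.

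The inequality is then immediate: from $(r-2)(s-2)\ge 0$ we obtain $rs\ge 2r+2s-4=2(r+s)-4\ge 2(n+2)-4=2n$. The only real point here is the observation that $(r-2)(s-2)\ge 0$ is precisely the algebraic bridge turning the additive bound $r+s\ge n+2$ into the multiplicative bound $rs\ge 2n$; everything else is bookkeeping.

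For the equality case, $rs=2n$ forces equality in both inequalities just used, i.e.\ $(r-2)(s-2)=0$ together with $r+s=n+2$. Up to swapping $G$ and $\overline G$ we may assume $r=2$, and then Lemma~\ref{lem:rank2}(2) identifies $\overline G$ as $K_{a,\,n-a}$ with $1\le a\le\lfloor n/2\rfloor$. Conversely, if $\overline G\cong K_{a,\,n-a}$ (or $G\cong K_{a,\,n-a}$), then Lemma~\ref{lem:rank2}(2) gives $r=2$ (resp.\ $s=2$), and combining $r+s\ge n+2$ with $s\le n$ forces $s=n$ (resp.\ $r=n$), so $rs=2n$; this pins down exactly the claimed equality cases. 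As a byproduct this also disposes of the equality case deferred from Theorem~\ref{thm:product-bound}: if $\rank(A_G+I)\cdot\rank(A_{\overline G}+I)=n$ while $G$ is neither $K_n$ nor $\overline{K_n}$, the present theorem would force the product to be at least $2n>n$, a contradiction, so $G$ or $\overline G$ must be $K_n$.

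I do not expect a genuine obstacle, since the whole argument is a few lines of deduction from Theorem~\ref{thm:NGsum1} and Lemma~\ref{lem:rank2}. The one spot requiring care is the equality analysis, where one must check \emph{both} directions; in particular one needs that $\overline G\cong K_{a,\,n-a}$ really attains $rs=2n$, i.e.\ that $\rank(A_{K_{a,\,n-a}}+I)=n$. I would obtain this for free from $s\ge n+2-r=n$, but if a self-contained check is preferred it follows from the fact that $A_{K_{a,\,n-a}}$ has eigenvalues $\pm\sqrt{a(n-a)}$ and $0$ with multiplicity $n-2$, so $A_{K_{a,\,n-a}}+I$ is singular only if $\sqrt{a(n-a)}=1$, which never happens for $n\ge 3$ since then $a(n-a)\ge 2$.
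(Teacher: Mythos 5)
Your proof is correct, and it rests on the same two pillars as the paper's argument --- Lemma~\ref{lem:rank2} and the strict form $r+s\ge n+2$ of Theorem~\ref{thm:NGsum1} --- but you finish in a genuinely cleaner way. The paper splits into three cases according to whether one of the ranks is $1$, is $2$, or both are at least $3$; in the rank-$2$ case it computes the spectrum of $K_{a,n-a}$ explicitly to see that the complementary rank is $n$, and in the last case it minimizes $rs$ subject to $r+s\ge n+2$ and $r,s\ge 3$, getting $3(n-1)\ge 2n$. Your single inequality $(r-2)(s-2)\ge 0$, valid because Lemma~\ref{lem:rank2}(1) gives $r,s\ge 2$, converts the additive bound directly into $rs\ge 2(r+s)-4\ge 2n$ and collapses the case analysis; it also hands you the equality analysis at once, since $rs=2n$ forces $(r-2)(s-2)=0$ and $r+s=n+2$, whence one rank is $2$ and the other is $n$, and Lemma~\ref{lem:rank2}(2) identifies the graph. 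In particular you obtain $\rank(A_{K_{a,n-a}}+I)=n$ for free from the additive bound instead of from the eigenvalues of $K_{a,n-a}$ (your optional spectral check is also correct), you verify both directions of the equivalence, and your closing remark correctly supplies the equality case of Theorem~\ref{thm:product-bound} that the paper defers to this theorem. I see no gaps.
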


\begin{proof}
Write $X:=A_G+I$ and $Y:=A_{\overline G}+I$. Since $G$ is neither $K_n$ nor $\overline{K_n}$, by Theorem~\ref{thm:NGsum1} we know that 
\begin{equation}\label{eq:no-equality-sum-lower-bound-1}
\rank X + \rank Y  \ge n+2.
\end{equation}

We divide into cases according to $\rank X$ and $\rank Y$.

\medskip
\emph{Case 1:} $\rank Y=1$ (the case $\rank X = 1$ is symmetric). By Lemma~\ref{lem:rank2}, this forces $\overline{G}=K_n$, contradicting the assumption. Hence this case cannot occur.

\medskip
\emph{Case 2:} $\rank Y=2$ (the case $\rank X = 2$ is symmetric). Then Lemma~\ref{lem:rank2} implies $G=K_{a,n-a}$ for some $1\le a\le n-1$, so $\overline{G}=K_a\cup K_{n-a}$.  
The adjacency spectrum of $K_{a,n-a}$ is $\{\sqrt{a(n-a)},-\sqrt{a(n-a)},0^{(n-2)}\}$, hence
$A_G+I$ has eigenvalues $\{1\pm\sqrt{a(n-a)},1^{(n-2)}\}$, which are all nonzero for $a(n-a)\ge 2$.
Thus $\rank X=n$ and $\rank Y=2$, yielding
\[
\rank X\cdot\rank Y = 2n.
\]
(The degenerate case $a(n-a)=1$ corresponds to $n=2$, which is excluded by hypothesis.)

\medskip
\emph{Case 3:} $\rank X\ge 3$ and $\rank Y\ge 3$.
Then, together with \eqref{eq:no-equality-sum-lower-bound-1},
\[
\rank X + \rank Y  \ge n+2,\qquad \rank X,\rank Y\ge 3.
\]
Fix $n\ge 4$. The product $rs$ (with integers $r,s\ge 3$ and $r+s\ge n+2$) is minimized at $(r,s)=(3,n-1)$, giving
\[
\rank X\cdot\rank Y  \ge 3(n-1) \ge 2n.
\]
For the finitely many small orders $n\in\{1,2,3\}$, the inequality is easily checked directly.

\medskip
Combining the three cases establishes the lower bound. The equality case occurs precisely in Case~2 (or its symmetric counterpart), i.e., when $G$ or $\overline G$ is a complete bipartite graph. This completes the proof.
\end{proof}

The next result gives an even stronger bound under the additional assumption that the graph and its complement are not complete bipartite. This yields a second strengthened version of the product inequality.

\begin{thm}\label{thm:strengthened-product-2}
Let $G$ be a graph on $n$ vertices that is neither $K_n$ nor $\overline{K_n}$ nor $K_{a,n-a}$ nor $\overline{K_{a,n-a}}$ for any $1\le a\le \lfloor n/2 \rfloor$. Then
\[
\rank(A_G + I) \cdot \rank(A_{\overline{G}} + I)  \ge 3(n-1).
\]
\end{thm}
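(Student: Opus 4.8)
The plan is to observe that the extra exclusions in the hypothesis are exactly what is needed to rule out Cases~1 and~2 in the proof of Theorem~\ref{thm:strengthened-product-1}, so that we are always in the remaining case, where the product is bounded below by $3(n-1)$ rather than merely by $2n$. Write $X := A_G + I$ and $Y := A_{\overline G} + I$. Since $G \neq K_n$, part~(1) of Lemma~\ref{lem:rank2} gives $\rank X \geq 2$; since $G$ is not $\overline{K_{a,n-a}}$ for any $1 \le a \le \lfloor n/2\rfloor$ --- equivalently, $\overline G$ is not complete bipartite --- part~(2) of Lemma~\ref{lem:rank2} excludes $\rank X = 2$, so $\rank X \geq 3$. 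Applying the same two statements to $\overline G$ in place of $G$ (using that $G$ is neither $\overline{K_n}$ nor $K_{a,n-a}$) gives $\rank Y \geq 3$ in the same way.

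Next, since $G$ is neither $K_n$ nor $\overline{K_n}$, the equality characterization in Theorem~\ref{thm:NGsum1} upgrades the additive bound to the strict form
\[
\rank X + \rank Y \ \geq\ n+2 .
\]
At this point I would also remark that the hypothesis is vacuous for $n \le 3$, since every graph on at most three vertices is one of $K_n$, $\overline{K_n}$, $K_{a,n-a}$, $\overline{K_{a,n-a}}$; hence we may assume $n \geq 4$.

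It then remains to carry out the elementary integer optimization. For integers $r,s \geq 3$ with $r + s \geq n+2$, set $k := r+s$; with $k$ fixed the product $r(k-r)$ is concave in $r$ and therefore minimized at the admissible endpoint $r = 3$, with value $3(k-3)$, which is nondecreasing in $k \geq n+2$. Since $n \geq 4$, the pair $(3, n-1)$ is admissible (both entries are $\ge 3$), so the minimum of the product over all such pairs is exactly $3(n-1)$. Applying this to $(\rank X, \rank Y)$ yields $\rank(A_G+I)\cdot\rank(A_{\overline G}+I) \geq 3(n-1)$, as claimed.

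I do not anticipate a genuine obstacle: the argument is a short recombination of results already established, and in particular it reuses the very optimization that appeared in Case~3 of Theorem~\ref{thm:strengthened-product-1}. The only point that calls for a little care is the bookkeeping that matches each of the four excluded graph families to the correct rank restriction in Lemma~\ref{lem:rank2}, applied once to $G$ and once to $\overline G$; everything else is immediate.
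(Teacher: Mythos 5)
Your proposal is correct and follows essentially the same route as the paper: use Lemma~\ref{lem:rank2} to force $\rank(A_G+I),\rank(A_{\overline G}+I)\ge 3$, use the non-equality case of Theorem~\ref{thm:NGsum1} to get $\rank(A_G+I)+\rank(A_{\overline G}+I)\ge n+2$, and then minimize the product of two integers $\ge 3$ with sum $\ge n+2$, exactly as in Case~3 of Theorem~\ref{thm:strengthened-product-1}. Your extra remarks (the hypothesis being vacuous for $n\le 3$, and the explicit concavity argument for the integer optimization) only make explicit details the paper leaves implicit.
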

\begin{proof}
Write $X := A_G + I$ and $Y := A_{\overline{G}} + I$. 
Since $G$ is neither $K_n$ nor $\overline{K_n}$ nor $K_{a,n-a}$ nor $\overline{K_{a,n-a}}$ for any $1\le a\le \lfloor n/2 \rfloor$, Lemma~\ref{lem:rank2} ensures that $\rank X \ge 3$ and $\rank Y \ge 3$.

We are in exactly the setting of Case~3 in the proof of Theorem~\ref{thm:strengthened-product-1}. 
Minimizing the product under these constraints gives
\[
\rank X \cdot \rank Y \ge 3(n-1),
\]
as required.
\end{proof}
\begin{remark}
For \(n\ge4\), the bound in Theorem~\ref{thm:strengthened-product-2} is best possible. Equality holds, for instance, when $G=K_2\cup (n-2)K_1$ or $G=K_{n-2}\cup 2K_1$.
\end{remark}

\section{Product and Sum Upper Bounds}\label{sec:remark1}

Throughout the previous sections we have established various Nordhaus--Gaddum 
type lower bounds for the complement rank, both in multiplicative 
and additive form. It is also natural to ask about the corresponding upper bounds. In this regard the situation is much simpler.

\begin{prop}\label{prop:trivial-upper}
For any graph $G$ of order $n$,
\[
\rank(A_G + I) \cdot \rank(A_{\overline{G}} + I) \le n^2, \qquad 
\rank(A_G + I) + \rank(A_{\overline{G}} + I) \le 2n,
\]
with equality in either inequality if and only if 
$\rank(A_G + I)=\rank(A_{\overline{G}} + I)=n$.
\end{prop}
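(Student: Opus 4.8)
The plan is to prove the two inequalities together, since the additive bound $\rank(A_G+I)+\rank(A_{\overline G}+I)\le 2n$ follows immediately from the trivial observation that both summands are ranks of $n\times n$ matrices, hence each is at most $n$. The multiplicative bound then follows from the additive one, or equally directly from $\rank(A_G+I)\le n$ and $\rank(A_{\overline G}+I)\le n$ combined. First I would record the trivial fact $\rank(M)\le n$ for any $M\in\mathbb{R}^{n\times n}$, apply it to $M=A_G+I$ and $M=A_{\overline G}+I$, and conclude both $\rank(A_G+I)\le n$ and $\rank(A_{\overline G}+I)\le n$; adding these gives the sum bound, and multiplying gives the product bound.

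For the equality characterizations: if $\rank(A_G+I)=\rank(A_{\overline G}+I)=n$, then clearly both the sum equals $2n$ and the product equals $n^2$, so this condition is sufficient in both cases. For necessity in the additive inequality, suppose $\rank(A_G+I)+\rank(A_{\overline G}+I)=2n$; since each term is at most $n$, neither can be strictly less than $n$ without forcing the sum below $2n$, so both equal $n$. For necessity in the multiplicative inequality, suppose $\rank(A_G+I)\cdot\rank(A_{\overline G}+I)=n^2$ with $\rank(A_G+I)=r$ and $\rank(A_{\overline G}+I)=s$; since $1\le r,s\le n$, we have $rs\le ns\le n^2$, with equality in the first step forcing $r=n$ and in the second forcing $s=n$ (here one should note $s\ge 1$ so that $rs=n^2$ with $r<n$ would give $rs\le (n-1)n<n^2$). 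Hence again both ranks equal $n$.

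I do not anticipate any genuine obstacle here: the statement is essentially a bookkeeping exercise about the trivial rank ceiling of an $n\times n$ matrix. The only minor point worth being careful about is making the necessity argument for the product inequality watertight — that is, ruling out the possibility that one factor exceeds what a rank can be (impossible, since ranks are at most $n$) and handling the edge case where a factor could in principle be $0$ (it cannot, since $A_G+I$ has $1$'s on the diagonal, so its rank is at least $1$; in fact Lemma~\ref{lem:rank2} pins down exactly when it equals $1$). With these elementary observations assembled, both equality characterizations reduce to the same statement: the product or the sum hits its trivial ceiling precisely when each factor individually does.
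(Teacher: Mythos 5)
Your proposal is correct and follows essentially the same route as the paper: both derive the two inequalities from the trivial bound $\rank(M)\le n$ for an $n\times n$ matrix and observe that equality in either one forces both ranks to equal $n$. Your extra care with the product equality case (noting $rs\le ns\le n^2$ and that the ranks are at least $1$) is fine but not needed beyond what the paper states.
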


\begin{proof}
The inequalities are immediate from the general bounds 
$0 \le \rank(A_G + I), \rank(A_{\overline{G}} + I) \le n$. 
If equality holds in the product inequality, then 
$\rank(A_G + I)=\rank(A_{\overline{G}} + I)=n$. Similarly, if equality holds in the sum inequality, then
$\rank(A_G + I)=\rank(A_{\overline{G}} + I)=n$. The converse is obvious.
\end{proof}

Thus the only way to attain the trivial upper bounds is to find graphs $G$ such that both $A_G+I$ and $A_{\overline{G}}+I$ are nonsingular. In particular, the existence of such graphs for each $n$ is precisely the content of our next theorem.

\begin{thm}\label{thm:exist-fullrank}
For every integer $n \ge 4$, there exists a graph $G_n$ of order $n$ such that
\[
\rank(A_{G_n}+I_n) = \rank(A_{\overline{G_n}}+I_n) = n.
\]
\end{thm}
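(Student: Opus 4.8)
The plan is to construct $G_n$ explicitly as a disjoint union of copies of $P_4$ together with one small remainder graph on at most seven vertices, chosen according to $n\bmod 4$, and to track the complement rank of a disjoint union by a Schur--complement identity. For a disjoint union one has $A_{G_1\cup G_2}+I=(A_{G_1}+I)\oplus(A_{G_2}+I)$, so this matrix is nonsingular if and only if both $A_{G_i}+I$ are. For the complements, $\overline{G_1\cup G_2}=\overline{G_1}\vee\overline{G_2}$, so $A_{\overline{G_1\cup G_2}}+I$ is the block matrix with diagonal blocks $B_i:=A_{\overline{G_i}}+I$ and all-ones off-diagonal blocks; assuming $B_1,B_2$ are invertible, eliminating the $B_1$-block by a Schur complement and then invoking the matrix determinant lemma gives
\[
\det\begin{pmatrix}B_1&J\\ J&B_2\end{pmatrix}=\det(B_1)\,\det(B_2)\,(1-\beta_1\beta_2),\qquad \beta_i:=\mathbf{1}^{\top}B_i^{-1}\mathbf{1}.
\]
Writing $\beta(G):=\mathbf{1}^{\top}(A_{\overline G}+I)^{-1}\mathbf{1}$ whenever $A_{\overline G}+I$ is invertible, this yields a gluing criterion: if $A_{G_i}+I$ and $A_{\overline{G_i}}+I$ are nonsingular for $i=1,2$ and $\beta(G_1)\beta(G_2)\neq 1$, then $G_1\cup G_2$ again has both associated matrices nonsingular, and a short computation gives $\beta(G_1\cup G_2)=\bigl(\beta(G_1)+\beta(G_2)-2\beta(G_1)\beta(G_2)\bigr)/\bigl(1-\beta(G_1)\beta(G_2)\bigr)$.

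Next I would verify the building blocks. A direct computation gives $\overline{P_4}\cong P_4$, $\det(A_{P_4}+I)=-1$, and $\beta(P_4)=2$; trivially $A_{K_1}+I=A_{\overline{K_1}}+I=(1)$ and $\beta(K_1)=1$; and for $P_6$ one checks (using that $A_{P_6}$ is invertible with $\det(A_{P_6})=-1$, that $\mathbf{1}^{\top}A_{P_6}^{-1}\mathbf{1}=4$ by solving $A_{P_6}y=\mathbf{1}$ explicitly, and that $-1$ is not an eigenvalue of $A_{P_6}$) that $A_{P_6}+I$ and $A_{\overline{P_6}}+I=J-A_{P_6}$ are both nonsingular with $\beta(P_6)=\tfrac{4}{3}$. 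Feeding these into the gluing criterion, an induction on $k$ shows that the disjoint union $kP_4$ of $k$ copies of $P_4$ has both matrices nonsingular for every $k\geq 1$, with the closed form $\beta(kP_4)=\tfrac{2k}{2k-1}$ --- the inductive step is licensed because $\beta((k-1)P_4)\cdot\beta(P_4)=\tfrac{4(k-1)}{2k-3}$ never equals $1$ for integer $k\geq 2$ --- and that $P_6\cup K_1$ has both matrices nonsingular, with $\beta(P_6\cup K_1)=1$.

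Finally I would set, for $n\geq 4$: $G_n=\tfrac{n}{4}P_4$ if $n\equiv 0$; $G_n=\tfrac{n-1}{4}P_4\cup K_1$ if $n\equiv 1$; $G_n=\tfrac{n-6}{4}P_4\cup P_6$ if $n\equiv 2$; and $G_n=\tfrac{n-7}{4}P_4\cup P_6\cup K_1$ if $n\equiv 3\pmod 4$ (when the number of $P_4$-copies is $0$ this reads $G_n=P_6$ or $G_n=P_6\cup K_1$, already handled). Each prescription is a legitimate graph on $n$ vertices for every $n\geq 4$ in its residue class, and in each case the gluing criterion applies because the relevant product of $\beta$-values equals $\tfrac{2k}{2k-1}$ times one of $1,\tfrac{4}{3},1$ and so is never $1$. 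Hence $A_{G_n}+I$ and $A_{\overline{G_n}}+I$ are simultaneously nonsingular, that is, $\rank(A_{G_n}+I)=\rank(A_{\overline{G_n}}+I)=n$. I expect the main obstacle to be the necessity of a relatively large remainder graph: one checks immediately that no graph on $2$ or $3$ vertices has both $A+I$ and $A_{\overline{\cdot}}+I$ nonsingular, so the classes $n\equiv 2,3\pmod 4$ cannot be adjusted by a tiny companion and genuinely force something like $P_6$; beyond pinning that down, the proof is a string of routine determinant and eigenvalue computations, with the Schur-complement identity and the scalar conditions $\beta_1\beta_2\neq 1$ carrying the weight.
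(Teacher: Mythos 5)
Your proposal is correct, and it takes a genuinely different route from the paper. The paper works modulo $6$ rather than modulo $4$: it takes $G_n=P_n$ when $n\equiv 0,4\pmod 6$, pads with a $K_1$ when $n$ is odd, and splits off a $P_4$ when $n\equiv 2,3\pmod 6$. Its technical core is the classical trigonometric spectrum of $P_m$ (to see that $-1\notin\spec(A_{P_m})$ iff $m\not\equiv 2\pmod 3$) together with a hands-on kernel analysis of $J_m-A_{P_m}$ for even $m$, carried out by solving the tridiagonal recurrence $x_{i-1}+x_{i+1}=c$ with the boundary conditions and showing it forces $c=0$ and then $x=0$. Your argument replaces all of this with a modular gluing device: the identity $A_{\overline{G_1\cup G_2}}+I=\bigl(\begin{smallmatrix}B_1&J\\ J&B_2\end{smallmatrix}\bigr)$ with $B_i=A_{\overline{G_i}}+I$, the Schur-complement/matrix-determinant-lemma evaluation $\det=\det(B_1)\det(B_2)(1-\beta_1\beta_2)$, and the composition law for $\beta(G)=\mathbf{1}^{\top}(A_{\overline G}+I)^{-1}\mathbf{1}$ (I verified the formula $\beta(G_1\cup G_2)=(\beta_1+\beta_2-2\beta_1\beta_2)/(1-\beta_1\beta_2)$, the values $\beta(P_4)=2$, $\beta(K_1)=1$, $\beta(P_6)=4/3$, the closed form $\beta(kP_4)=2k/(2k-1)$, and that every product $\beta_1\beta_2$ you need avoids $1$; all correct). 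What your approach buys is that every explicit computation lives on a matrix of size at most $6$, and the gluing criterion is reusable for arbitrary disjoint unions; what the paper's approach buys is that the witnesses are single paths (or nearly so) and the path-spectrum facts it develops are used elsewhere in Section~5. One small presentational point: in the residue classes $n\equiv 2,3\pmod 4$ you should state explicitly that the criterion is applied to the pair $\bigl(kP_4,\;P_6\bigr)$ or $\bigl(kP_4,\;P_6\cup K_1\bigr)$, i.e., that you first assemble $P_6\cup K_1$ and only then glue on the $P_4$'s, since the two-block lemma is what licenses each step; as written this is implicit but worth making precise.
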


Before proving Theorem~\ref{thm:exist-fullrank}, we first establish several preliminary results that will be needed in the argument.

\begin{lem}[Path spectrum]\label{lem:path-spectrum}
Let $A_{P_m}$ be the adjacency matrix of the path $P_m$. Its eigenvalues are
\[
\lambda_k = 2\cos \frac{k\pi}{m+1},\qquad k=1,2,\dots,m.
\]
In particular,
\[
\rank(A_{P_m}+I_m) =
\begin{cases}
m, & m\not\equiv 2 \pmod{3},\\
m-1, & m\equiv 2 \pmod{3}.
\end{cases}
\]
\end{lem}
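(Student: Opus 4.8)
The plan is to prove the eigenvalue formula by exhibiting explicit eigenvectors, and then to read off $\rank(A_{P_m}+I_m)$ from the multiplicity of the eigenvalue $-1$.

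First I would fix $k\in\{1,\dots,m\}$, set $\theta_k:=k\pi/(m+1)$, and consider the vector $\bv^{(k)}\in\mathbb{R}^m$ whose $j$-th coordinate is $\sin(j\theta_k)$. Labelling the vertices of $P_m$ as $1,2,\dots,m$ along the path, the $j$-th coordinate of $A_{P_m}\bv^{(k)}$ equals $\sin((j-1)\theta_k)+\sin((j+1)\theta_k)$ for $2\le j\le m-1$, equals $\sin(2\theta_k)$ for $j=1$, and equals $\sin((m-1)\theta_k)$ for $j=m$. The product-to-sum identity $\sin((j-1)\theta)+\sin((j+1)\theta)=2\cos\theta\,\sin(j\theta)$ handles all the interior coordinates at once; for the two endpoints one uses the boundary values $\sin(0)=0$ and $\sin((m+1)\theta_k)=\sin(k\pi)=0$ to see that the same identity remains valid there as well. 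This gives $A_{P_m}\bv^{(k)}=2\cos\theta_k\cdot\bv^{(k)}$. Since $\bv^{(k)}\neq 0$ (its first coordinate $\sin\theta_k$ is nonzero because $0<\theta_k<\pi$) and the numbers $2\cos\theta_1>2\cos\theta_2>\cdots>2\cos\theta_m$ are pairwise distinct (cosine is strictly decreasing on $(0,\pi)$), these are all $m$ eigenvalues of $A_{P_m}$, each of multiplicity one.

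For the rank statement I would use $\rank(A_{P_m}+I_m)=m-m_{-1}(A_{P_m})$, where $m_{-1}$ denotes the multiplicity of $-1$ as an eigenvalue. Since every eigenvalue is simple, $m_{-1}\in\{0,1\}$, and $m_{-1}=1$ precisely when $-1\in\{2\cos\theta_k:1\le k\le m\}$. Now $2\cos\theta_k=-1$ is equivalent to $\theta_k=2\pi/3$, i.e.\ to $k=\tfrac{2(m+1)}{3}$; this is an integer lying in $\{1,\dots,m\}$ if and only if $3\mid(m+1)$, that is, $m\equiv 2\pmod 3$ (the inequalities $1\le 2(m+1)/3\le m$ hold automatically for every $m\ge 2$, hence for every such $m$). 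Therefore $A_{P_m}+I_m$ has rank $m-1$ when $m\equiv 2\pmod 3$ and rank $m$ otherwise.

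The computation is essentially routine. The only points that demand a little care are the verification that the eigenvector identity survives at the two endpoints of the path — which is exactly the place where the choice of $\theta_k=k\pi/(m+1)$, making $\sin((m+1)\theta_k)=0$, is forced — and the small check that the candidate index $k=2(m+1)/3$ really falls in the admissible range; neither of these is a genuine obstacle.
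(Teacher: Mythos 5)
Your proof is correct and takes essentially the same route as the paper: the paper simply cites the classical formula for the path spectrum and then performs the identical computation, namely that $-1$ is an eigenvalue iff $2\cos\theta=-1$ for $\theta=k\pi/(m+1)$, i.e.\ $3k=2(m+1)$, which happens iff $m\equiv 2\pmod 3$. The only difference is that you additionally verify the cited spectrum formula via the standard sine eigenvectors (including the correct boundary checks and the range check on $k=2(m+1)/3$), which is a harmless and correct addition.
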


\begin{proof}
The path spectrum is classical (see, e.g.~\cite[Section~2.6.7]{Cvetkovic1980}). Then $-1\in\sigma(A_{P_m})$ if and only if $2\cos\theta=-1$ for some $\theta=\frac{k\pi}{m+1}$, i.e., 
$\cos\theta=-\frac12$, hence $\theta=\frac{2\pi}{3}$ and $3k=2(m+1)$, which is possible if and only if $m\equiv 2\pmod{3}$.
Therefore $A_{P_m}+I_m$ is invertible if and only if $m\not\equiv 2\pmod{3}$.
\end{proof}

\begin{lem}\label{lem:JminusA-even}
Let $m\ge 2$ be even. Then \(\rank(J_m-A_{P_m})=m\).
\end{lem}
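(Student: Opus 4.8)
The plan is to prove the equivalent statement that the symmetric matrix $J_m-A_{P_m}=A_{\overline{P_m}}+I_m$ is nonsingular, i.e.\ $\det(J_m-A_{P_m})\neq 0$. The opening move is to observe that since $m$ is even, $m+1$ is odd, so by Lemma~\ref{lem:path-spectrum} none of the eigenvalues $2\cos\frac{k\pi}{m+1}$ of $A_{P_m}$ vanishes (this would force $2k=m+1$, impossible for even $m$); hence $A_{P_m}$ itself is invertible. Equivalently, $P_m$ with $m$ even has the perfect matching $\{12,34,\dots,(m-1)m\}$, giving $\det A_{P_m}=\pm 1$.

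With $A_{P_m}$ invertible, I would write $J_m-A_{P_m}=-A_{P_m}+\mathbf 1\mathbf 1^\top$ as a rank-one perturbation and apply the matrix determinant lemma to get
\[
\det(J_m-A_{P_m})=\det(-A_{P_m})\bigl(1-\mathbf 1^\top A_{P_m}^{-1}\mathbf 1\bigr),
\]
so everything reduces to verifying $\mathbf 1^\top A_{P_m}^{-1}\mathbf 1\neq 1$. To compute this quantity I would solve $A_{P_m}x=\mathbf 1$ directly: the system reads $x_2=1$, $x_{m-1}=1$, and $x_{i-1}+x_{i+1}=1$ for $2\le i\le m-1$. Splitting the three-term recurrence into its odd- and even-indexed parts shows $x$ is periodic of period $4$ in the index (the even entries run $1,0,1,0,\dots$ starting from $x_2=1$; the odd entries run $x_1,\,1-x_1,\,x_1,\,1-x_1,\dots$), and the boundary value $x_{m-1}=1$ then pins down $x_1$. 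Summing the entries yields $\mathbf 1^\top A_{P_m}^{-1}\mathbf 1=m/2$ when $m\equiv 0\pmod 4$ and $\mathbf 1^\top A_{P_m}^{-1}\mathbf 1=m/2+1$ when $m\equiv 2\pmod 4$; in both cases this is an integer at least $2$, hence never $1$, so $\det(J_m-A_{P_m})\neq 0$ and $\rank(J_m-A_{P_m})=m$.

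A slightly more self-contained variant, avoiding the determinant lemma, is to show directly that $\ker(J_m-A_{P_m})=\{0\}$: any $v$ in this kernel satisfies $A_{P_m}v=(\mathbf 1^\top v)\mathbf 1$, which is exactly the same recurrence with the constant $s:=\mathbf 1^\top v$ replacing $1$; the period-$4$ analysis again applies, and substituting the resulting entry-sum into the identity $\sum_j v_j=s$ produces a relation $s=\lambda s$ with $\lambda\ge 2$, forcing $s=0$ and then (by injectivity of $A_{P_m}$) $v=0$. I expect the only real work — and the only place one must be careful — to be the bookkeeping of the periodic solution and of the entry-sum in the two residue classes $m\equiv 0$ and $m\equiv 2\pmod 4$; there is nothing deep beyond the invertibility of $A_{P_m}$, which Lemma~\ref{lem:path-spectrum} already supplies, together with the elementary rank-one identity.
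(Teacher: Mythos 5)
Your proposal is correct. The primary route via the matrix determinant lemma is a light repackaging of the paper's argument: the paper works directly with the kernel, reducing $(J_m-A_{P_m})x=0$ to $A_{P_m}x=c\,\mathbf 1$ with $c=\mathbf 1^\top x$ and forcing $c=0$, whereas you solve $A_{P_m}x=\mathbf 1$ once and check that $\mathbf 1^\top A_{P_m}^{-1}\mathbf 1\neq 1$; the computational core (the period-$4$ recurrence split by parity of the index and the entry-sum bookkeeping in the two residue classes mod $4$) is identical, and your stated values $\mathbf 1^\top A_{P_m}^{-1}\mathbf 1=m/2$ for $m\equiv 0\pmod 4$ and $m/2+1$ for $m\equiv 2\pmod 4$ check out (e.g.\ $2$ for $m=4$, $4$ for $m=6$). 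Your ``self-contained variant'' at the end is precisely the paper's proof. If anything, the determinant-lemma framing buys a slightly cleaner logical structure — invertibility of $A_{P_m}$ (which you correctly get from the odd denominator $m+1$, or from the perfect matching) plus one scalar inequality — at the cost of invoking one extra identity; both are fine.
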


\begin{proof}
Let $A:=A_{P_m}$. Consider $(J_m-A)x=0$. Writing $\mathbf{1}$ for the all-ones column vector and $c:=\mathbf{1}^\top x$, we have
\[
Ax=J_mx=c\,\mathbf{1}.
\]
Expanding $Ax=c\mathbf{1}$ in coordinates using the tridiagonal structure of $A$ gives
\begin{equation}\label{eq:coord-rec}
\begin{cases}
x_2=c,\\
x_{i-1}+x_{i+1}=c,& 2\le i\le m-1,\\
x_{m-1}=c.
\end{cases}
\end{equation}
Let $y_i:=x_i-\frac{c}{2}$. Then $y_{i+1}+y_{i-1}=0$ for $2\le i\le m-1$, with $y_2=\frac{c}{2}$ and $y_{m-1}=\frac{c}{2}$.
The homogeneous recurrence has period $4$: if $y_1=a$, $y_2=b$, then $y_3=-a$, $y_4=-b$, $y_5=a$, $y_6=b$, etc. Since $m$ is even, for the odd index $m-1$ we have
\begin{equation}\label{eq:ym1}
y_{m-1}=\begin{cases}
-a,& m\equiv 0\pmod 4,\\
\ \ a,& m\equiv 2\pmod 4,
\end{cases}
\end{equation}
while $y_2=b$ by definition. Using the boundary conditions $y_2=\dfrac{c}{2}$ and $y_{m-1}=\dfrac{c}{2}$, we get
\begin{equation}\label{eq:a-b-from-c}
(a,b)=
\begin{cases}
\left(-\dfrac{c}{2}, \dfrac{c}{2}\right),& m\equiv 0\pmod 4,\\[6pt]
\left( \dfrac{c}{2}, \dfrac{c}{2}\right),& m\equiv 2\pmod 4.
\end{cases}
\end{equation}
One can also check that 
\begin{equation}\label{eq:ym11121}
\sum_{i=1}^m y_i=\begin{cases}
0,& m\equiv 0\pmod 4,\\
a+b,& m\equiv 2\pmod 4.
\end{cases}
\end{equation}
Using $c=\sum_{i=1}^m x_i=\frac{m}{2}c+\sum_{i=1}^m y_i$, one obtains in both subcases that $c=0$, hence $x_2=x_{m-1}=0$ and the recurrence forces $x\equiv 0$.
Thus $\ker(J_m-A)=\{0\}$ and $J_m-A$ is invertible.
\end{proof}

\begin{prop}\label{prop:even-case}
Let $n\ge 4$ with $n\equiv 0$ or $4\pmod{6}$, and set $G=P_n$.
Then
\[
\rank(A_G+I_n) = \rank(A_{\overline{G}}+I_n) = n.
\]
\end{prop}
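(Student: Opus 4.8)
The plan is to verify the two rank conditions separately, and each is a direct application of the lemmas just proved. First I would handle $\rank(A_G+I_n)$. Since $G=P_n$, Lemma~\ref{lem:path-spectrum} gives $\rank(A_{P_n}+I_n)=n$ precisely when $n\not\equiv 2\pmod 3$. So I would check that both residue classes in the hypothesis, $n\equiv 0\pmod 6$ and $n\equiv 4\pmod 6$, satisfy $n\not\equiv 2\pmod 3$: indeed $n\equiv 0\pmod 6$ gives $n\equiv 0\pmod 3$, and $n\equiv 4\pmod 6$ gives $n\equiv 1\pmod 3$, neither of which is $2$. Hence $A_{P_n}+I_n$ is nonsingular and $\rank(A_G+I_n)=n$.

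Next I would handle $\rank(A_{\overline{G}}+I_n)$. Using the identity $A_{\overline G}+I = J - A_G$ recorded at the start of Section~\ref{sec:sum1}, this rank equals $\rank(J_n - A_{P_n})$. Now observe that both hypothesized residue classes force $n$ to be even: $n\equiv 0\pmod 6$ and $n\equiv 4\pmod 6$ are both even residues modulo $6$. Therefore Lemma~\ref{lem:JminusA-even} applies directly and gives $\rank(J_n - A_{P_n}) = n$, i.e. $\rank(A_{\overline G}+I_n)=n$.

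Combining the two computations yields $\rank(A_G+I_n)=\rank(A_{\overline G}+I_n)=n$, which is the assertion. The only real content is the bookkeeping check that the stated congruence condition on $n$ simultaneously lands in the "good" class for Lemma~\ref{lem:path-spectrum} (namely $n\not\equiv 2\pmod 3$) and in the "even" class required by Lemma~\ref{lem:JminusA-even}; I expect no genuine obstacle, since $n\equiv 0$ or $4\pmod 6$ was evidently chosen precisely to make both lemmas fire at once. One should also note in passing that $n\ge 4$ ensures $m=n\ge 2$ so that Lemma~\ref{lem:JminusA-even} is applicable, and that $P_n$ with $n\ge 4$ is genuinely neither $K_n$ nor $\overline{K_n}$, so this indeed produces nontrivial examples attaining the trivial upper bounds of Proposition~\ref{prop:trivial-upper}.
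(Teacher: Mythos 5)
Your proof is correct and follows exactly the same route as the paper: Lemma~\ref{lem:path-spectrum} handles $\rank(A_{P_n}+I_n)=n$ since $n\equiv 0$ or $4\pmod 6$ implies $n\not\equiv 2\pmod 3$, and Lemma~\ref{lem:JminusA-even} handles the complement side since $n$ is even. The extra bookkeeping you include (checking both residue classes explicitly) is harmless and matches the paper's two-line argument.
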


\begin{proof}
By Lemma~\ref{lem:path-spectrum}, since $n\not\equiv 2\pmod{3}$ we have $\rank(A_{P_n}+I_n)=n$.
For the complement side, we need $\rank(J_n-A_{P_n})=n$,
which holds by Lemma~\ref{lem:JminusA-even} for even $n$.
\end{proof}

\begin{prop}\label{prop:odd-case}
Let $n\ge 4$ with $n\equiv 1$ or $5\pmod{6}$, and set $G=P_{n-1}\cup K_1$.
Then
\[
\rank(A_G+I_n) = \rank(A_{\overline{G}}+I_n) = n.
\]
\end{prop}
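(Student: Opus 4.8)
The plan is to compute the two ranks separately, using the block structure coming from the disjoint union $G=P_{n-1}\cup K_1$; label the path vertices $1,\dots,n-1$ and the isolated vertex $n$.

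First I would handle $\rank(A_G+I_n)$. Since $G$ is a disjoint union, $A_G+I_n$ is block diagonal with blocks $A_{P_{n-1}}+I_{n-1}$ and the $1\times 1$ block $(1)$, so $\rank(A_G+I_n)=\rank(A_{P_{n-1}}+I_{n-1})+1$. The hypothesis $n\equiv 1$ or $5\pmod 6$ gives $n-1\equiv 0$ or $1\pmod 3$, in particular $n-1\not\equiv 2\pmod 3$; Lemma~\ref{lem:path-spectrum} then yields $\rank(A_{P_{n-1}}+I_{n-1})=n-1$, hence $\rank(A_G+I_n)=n$. This part is routine.

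The main work is the complement side, where $A_{\overline G}+I_n=J_n-A_G$. In the chosen labeling this matrix has the bordered form
\[
J_n-A_G=\begin{pmatrix} J_{n-1}-A_{P_{n-1}} & \mathbf{1}_{n-1}\\[2pt] \mathbf{1}_{n-1}^{\top} & 1\end{pmatrix},
\]
and I would show directly that its kernel is trivial. Write a putative null vector as $x=(x',x_n)$ with $x'\in\mathbb{R}^{n-1}$, and set $\sigma:=\mathbf{1}_{n-1}^{\top}x'$. The last coordinate of $(J_n-A_G)x=0$ reads $\sigma+x_n=0$, i.e.\ $x_n=-\sigma$; the first $n-1$ coordinates read $A_{P_{n-1}}x'=(\sigma+x_n)\mathbf{1}_{n-1}$, which, using $x_n=-\sigma$, collapses to $A_{P_{n-1}}x'=0$. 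Since $n$ is odd, $A_{P_{n-1}}$ is invertible (a zero eigenvalue would require $2\cos\frac{k\pi}{n}=0$, i.e.\ $n$ even, by Lemma~\ref{lem:path-spectrum}), so $x'=0$, whence $\sigma=0$ and $x_n=0$. Thus $J_n-A_G$ is nonsingular and $\rank(A_{\overline G}+I_n)=n$.

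The only substantive point is the cancellation $(\sigma+x_n)\mathbf{1}_{n-1}=0$ in the bordered system, together with the observation that here one needs the invertibility of $A_{P_{n-1}}$ itself (the condition ``$n-1$ even'', equivalently ``$n$ odd''), which is a different divisibility condition from the one ``$n-1\not\equiv 2\pmod 3$'' used on the $G$-side. There is essentially no computation; the care needed is just in tracking the two residue conditions and confirming that the relevant facts about $P_{n-1}$ follow from Lemma~\ref{lem:path-spectrum}. One could alternatively phrase the complement-side step via the Schur complement of $M:=J_{n-1}-A_{P_{n-1}}$ (invertible by Lemma~\ref{lem:JminusA-even}), but the direct kernel computation above avoids even needing that lemma.
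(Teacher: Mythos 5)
Your proof is correct and follows essentially the same route as the paper: the same block-diagonal computation for $\rank(A_G+I_n)$ via Lemma~\ref{lem:path-spectrum}, and the same bordered-matrix kernel argument reducing $(J_n-A_G)x=0$ to $A_{P_{n-1}}x'=0$ and invoking invertibility of $A_{P_{n-1}}$ for even $n-1$. Your closing observation is also accurate: the paper cites Lemma~\ref{lem:JminusA-even} in this proof but its kernel computation, like yours, never actually uses it.
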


\begin{proof}
Write $m:=n-1$. Then $m\equiv 0$ or $4\pmod{6}$. Since $G$ is a disjoint union, 
\[
A_G+I_n = \begin{pmatrix} A_{P_m}+I_m & 0 \\ 0 & 1 \end{pmatrix},
\]
hence $\rank(A_G+I_n)=\rank(A_{P_m}+I_m)+1 = m+1 = n$ by Proposition~\ref{prop:even-case}.
For the complement side,
\[
A_{\overline{G}}+I_n = J_n - A_G = 
\begin{pmatrix}
J_m-A_{P_m} & \mathbf{1} \\
\mathbf{1}^\top & 1
\end{pmatrix}.
\]
By Lemma~\ref{lem:JminusA-even}, $J_m-A_{P_m}$ is invertible. 
Write $M:=J_m-A_{P_m}$ and let $\mathbf{1}\in\mathbb{R}^m$ denote the all--ones column vector.
Consider the block matrix
\[
B =
\begin{pmatrix}
M & \mathbf{1}\\
\mathbf{1}^\top & 1
\end{pmatrix}.
\]
We claim that $B$ is invertible. Indeed, suppose $B\binom{u}{w}=0$ with $u\in\mathbb{R}^m$, $w\in\mathbb{R}$. 
Then
\[
Mu+w \mathbf{1}=0,\qquad \mathbf{1}^\top u + w = 0.
\]
From the second equation $w=-\mathbf{1}^\top u$, and substituting into the first gives
\[
(J_m-A_{P_m})u-(\mathbf{1}^\top u) \mathbf{1}=0
\ \Longleftrightarrow\
J_m u - A_{P_m} u - (\mathbf{1}^\top u) \mathbf{1}=0.
\]
Since $J_m u=(\mathbf{1}^\top u) \mathbf{1}$, the two terms cancel and we obtain
\[
A_{P_m}u=0.
\]
Because $m$ is even, $A_{P_m}$ has no zero eigenvalue (by Lemma~\ref{lem:path-spectrum}) and is therefore invertible; hence $u=0$, and then $w=-\mathbf{1}^\top u=0$. Thus $\ker B=\{0\}$ and $B$ is invertible, i.e., $\rank\left(A_{\overline{G}}+I_n\right)=n$.
\end{proof}

\begin{prop}\label{prop:n2mod6}
Let $n\ge 4$ with $n\equiv 2\pmod 6$, and set \(G = P_4 \cup P_{n-4}\). Then
\[
\rank(A_G+I_n) = \rank(A_{\overline{G}}+I_n) = n.
\]
\end{prop}

\begin{proof}
Write $A_m:=A_{P_m}$. Since $G$ is a disjoint union, 
\[
A_G+I_n = (A_4+I_4) \oplus (A_{n-4}+I_{n-4}).
\] Since $4\equiv 1\pmod 3$ and $n-4\equiv 1\pmod 3$, by Lemma~\ref{lem:path-spectrum} we obtain
\[
\rank(A_G+I_n)=\rank(A_4+I_4)+\rank(A_{n-4}+I_{n-4})=4+(n-4)=n.
\]

It remains to prove $\rank(A_{\overline G}+I_n)=n$, i.e., that $J_n-A_G$ is invertible.
Write $x=(u,v)^\top$, where $u\in\mathbb{R}^4$ and $v\in\mathbb{R}^{n-4}$, according to $V(G)=V(P_4) \cup V(P_{n-4})$ and set $c:=\mathbf{1}^\top x$.
Then $(J_n-A_G)x=0$ is equivalent to
\[
A_4u=c \mathbf{1}_4,\qquad A_{n-4}v=c\,\mathbf{1}_{n-4}.
\]
Summing coordinates on each component and using the path recurrence (as in the proof of
Lemma~\ref{lem:JminusA-even}) one obtains
\[
\sum_{i=1}^{4} u_i=2c,\qquad 
\sum_{i=1}^{n-4} v_i=
\begin{cases}
\frac{n-4}{2}\,c,& n\equiv 0\pmod 4,\\
\left(\frac{n-4}{2}+1\right)c,& n\equiv 2\pmod 4.
\end{cases}
\]
Hence
\[
c=\mathbf{1}^\top x=\sum_{i=1}^{4} u_i+\sum_{i=1}^{n-4} v_i=
\begin{cases}
\frac{n}{2}\,c,& n\equiv 0\pmod 4,\\
\left(\frac{n}{2}+1\right)c,& n\equiv 2\pmod 4,
\end{cases}
\]
which forces $c=0$ (since $n\ge 4$). Consequently $A_4u=0$ and $A_{n-4}v=0$, and because both
$A_4$ and $A_{n-4}$ are invertible, we get $u=v=0$.
Thus $\ker(J_n-A_G)=\{0\}$, proving that $J_n-A_G$ is invertible and
$\rank(A_{\overline G}+I_n)=n$.
\end{proof}

\begin{prop}\label{prop:n3mod6}
Let $n\ge 4$ with $n\equiv 3\pmod 6$, and set \(G = P_4 \cup P_{n-5} \cup K_1\). Then
\[
\rank(A_G+I_n) = \rank(A_{\overline{G}}+I_n) = n .
\]
\end{prop}

\begin{proof}
Write $A_m:=A_{P_m}$. Since $G$ is a disjoint union,
\[
A_G+I_n=(A_4+I_4)\oplus(A_{n-5}+I_{n-5})\oplus(1).
\]
Here $4\equiv1\pmod 3$ and $n-5\equiv1\pmod 3$, by Lemma~\ref{lem:path-spectrum} we obtain 
\[
\rank(A_G+I_n)=\rank(A_4+I_4)+\rank(A_{n-5}+I_{n-5})+1=4+(n-5)+1=n.
\]

For the complement, $(J_n-A_G)x=0$ implies $A_Gx=c\mathbf{1}$ with $c=\mathbf{1}^\top x$.
Splitting $x=(u,v,w)$, where $u\in\mathbb{R}^4$, $v\in\mathbb{R}^{n-5}$, and $w\in\mathbb{R}$, along the three components yields
\[
A_4u=c\mathbf{1}_4,\quad A_{n-5}v=c\mathbf{1}_{n-5},\quad 0=c.
\]
Hence $c=0$, and since $A_4$ and $A_{n-5}$ are invertible, we get $u=v=0$ and also $w=0$.
Thus $\ker(J_n-A_G)=\{0\}$ and $\rank\left(A_{\overline G}+I_n\right)=n$.\end{proof}

We are now ready to present the following.
\begin{proof}[Proof of Theorem~\ref{thm:exist-fullrank}]
Define $G_n$ by cases according to $n\bmod 6$:
\[
G_n \;:=\;
\begin{cases}
P_n, & n\equiv 0,4 \pmod 6,\\
P_{n-1} \cup K_1, & n\equiv 1,5 \pmod 6,\\
P_4 \cup P_{n-4}, & n\equiv 2 \pmod 6,\\
P_4 \cup P_{n-5} \cup K_1, & n \equiv 3 \pmod 6.
\end{cases}
\]
By Propositions~\ref{prop:even-case}, \ref{prop:odd-case}, \ref{prop:n2mod6}, and~\ref{prop:n3mod6}, 
each case satisfies \(\rank(A_{G_n}+I_n)=\rank(A_{\overline{G_n}}+I_n)=n\). This completes the proof.\end{proof}

\section*{Acknowledgements}
The author thanks Clive Elphick for many valuable suggestions, which have significantly improved an earlier version of this paper. The author is also grateful to Jie Ma and Shengtong Zhang for their helpful comments. The author also thanks an anonymous referee for pointing out an issue in the proof of Theorem~\ref{thm:NGsum1} in an earlier version.

\end{document}